\documentclass[11pt,leqno]{amsart}

\topmargin -1.2cm \evensidemargin 0cm \oddsidemargin 0cm \textwidth 16cm \textheight 22cm
\usepackage{amscd}
\usepackage{color}
\usepackage{amssymb}
\usepackage{amsfonts}
\usepackage{latexsym}
\usepackage{verbatim}
\usepackage{enumerate}

\theoremstyle{plain}
\newtheorem{theorem}{Theorem}[section]

\newtheorem{lemma}[theorem]{Lemma}

\newtheorem{rem}[theorem]{Remark}

\renewcommand{\b}{\begin{equation}}
\newcommand{\e}{\end{equation}}

\newcommand\C{{\mathbb C}}
\newcommand\R{{\mathbb R}}

\sloppy
\title{On the stability of the anomaly flow}
\thanks{This work was supported by GNSAGA of INdAM}
\address{Dipartimento di Ingegneria e Scienze dell'Informazione e Matematica \\ Universit\`a dell'Aquila\\
via Vetoio\\ 67100 L'Aquila\\ Italy}
\email{lucio.bedulli@univaq.it}
\address{Dipartimento di Matematica G. Peano \\ Universit\`a di Torino\\
Via Carlo Alberto 10\\
10123 Torino\\ Italy}
 \email{luigi.vezzoni@unito.it}

\author{Lucio Bedulli and Luigi Vezzoni}

\begin{document}

\maketitle
\begin{abstract}
We prove that the parabolic flow of conformally balanced metrics introduced in \cite{AF7} is stable around Calabi-Yau metrics. The result shows that the flow can converge  on a K\"ahler manifold even if the initial metric is not conformally K\"ahler. 
\end{abstract}

\section{Introduction}

Anomaly flow is a geometric flow of  Hermitian metrics studied in \cite{AF1,AF9,AF2,AF8,AF3,AF4,AF5,AF6,AF7,UgartePujia}. The flow was originally considered in \cite{AF8} on complex threefolds to study the Strominger system \cite{Strominger} and involves a real parameter $\alpha'$. The flow was subsequently generalized to any complex dimension $n \geq 3$ for $\alpha'=0$ in \cite{AF7}.  The latter evolves an initial Hermitian metric $\omega_0$ on a compact complex manifolds $M$
of complex dimension $n\geq 3$ with $c_1(M)=0$ by  
\begin{equation}\label{phongflow}
\tfrac{\partial}{\partial t}(|\Omega|_{\omega}\omega^{n-1})=i\partial\bar\partial \omega^{n-2}\,,\quad \omega(0)=\omega_0\,,
\end{equation}
where $\Omega$ is a fixed complex volume form and $|\cdot|_{\omega}$ is the pointwise norm with respect to $\omega$. By \lq\lq complex volume form\rq\rq $\,$ we just mean a nowhere vanishing $(n,0)$-form, indeed for the purpose of the present paper we do not need to assume $\Omega$ to be holomorphic.

The well-posedness of the flow is proved in \cite[Theorem 1]{AF7}  under the assumption on $\omega_0$ to be conformally balanced (in such a case the components of $\omega(t)$ satisfy a parabolic system \cite[Theorem 4]{AF7}). Moreover, when $\omega_0$ is conformally balanced, \eqref{phongflow} is conformally equivalent to the Hermitian curvature flow introduced by Ustinovskiy in \cite{YU1} (see \cite{AF9}). 

The flow \eqref{phongflow} can only converge when $M$ is K\"ahler. The research of the present paper is motivated by the following theorem about the long time existence and convergence of the flow when  $|\Omega|_{\omega_0}^{1/(n-1)}\omega_0$ is a K\"ahler metric:

\begin{theorem}[Phong, Picard and  Zhang  {\cite[Theorem 2]{AF7}}]\label{phong}
Let $(M, \chi)$ be a compact K\"ahler manifold with vanishing first Chern class and let $\Omega$ be a complex volume form on  $M$ with constant norm with respect to $\chi$. Let $\omega_0$ be a Hermitian metric on $M$ such that 
$$
|\Omega|_{\omega_0}\omega^{n-1}_0=\chi^{n-1}\,,
$$
then \eqref{phongflow} starting from $\omega_0$ has a long-time solution which converges in $C^{\infty}$--topology to the unique Ricci-flat  K\"ahler metric $\omega_{\infty}\in [\chi] \in H^{1,1}(M)$ .
\end{theorem}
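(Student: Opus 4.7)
The strategy is to reduce the geometric flow \eqref{phongflow} to a scalar parabolic complex Monge-Amp\`ere equation on the K\"ahler background $(M,\chi)$. The starting observation is that the right-hand side of \eqref{phongflow} is $i\partial\bar\partial$-exact, so the Aeppli class of $|\Omega|_\omega\omega^{n-1}$ is preserved by the flow; the hypothesis then forces $|\Omega|_\omega\omega^{n-1}\in[\chi^{n-1}]$ for all $t$. This suggests the ansatz
$$|\Omega|_\omega\omega^{n-1}=\chi_\varphi^{n-1}\,,\qquad \chi_\varphi:=\chi+i\partial\bar\partial\varphi(t)>0\,,$$
with $\varphi(0)=0$. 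Combined with the pointwise identity $|\Omega|_\omega^2\,\omega^n/n!=i^{n^2}\Omega\wedge\bar\Omega=|\Omega|_\chi^2\,\chi^n/n!$, one solves algebraically (this is where $n\geq 3$ is used) for $\omega$ as a conformal rescaling $\omega=F(\varphi)\chi_\varphi$ with $F(\varphi)=c\,(\chi_\varphi^n/\chi^n)^{1/(n-2)}$, for an explicit constant $c>0$ depending on $|\Omega|_\chi$.

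Next I would verify that this ansatz is genuinely consistent with \eqref{phongflow}. Substituting $\omega=F(\varphi)\chi_\varphi$, using that $\chi_\varphi$ is closed to pull $i\partial\bar\partial$ past wedge products, and wedging with $\chi_\varphi$ to kernel-away the Lefschetz isomorphism, one sees that \eqref{phongflow} collapses to a scalar equation of the form
$$\partial_t\varphi=A\,\frac{(\chi+i\partial\bar\partial\varphi)^n}{\chi^n}+C(t)\,,\qquad \varphi(0)=0\,,$$
for an explicit constant $A>0$ and a time-dependent constant $C(t)$ pinned down by a normalization such as $\int_M\varphi\,\chi^n=0$. This is parabolic in the Monge-Amp\`ere sense, and its stationary solutions are precisely the K\"ahler potentials of Ricci-flat K\"ahler metrics in $[\chi]$.

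The next step is the standard Yau-Cao a priori estimates: a $C^0$-estimate for $\varphi$ and $\partial_t\varphi$ via the maximum principle and Moser iteration; a second-order estimate obtained by applying Yau's inequality to $\mathrm{tr}_{\chi_\varphi}\chi$; and higher-order regularity via a Calabi-type $C^3$-computation or the Evans-Krylov theorem. These give long-time existence by the usual continuation argument and smooth subsequential convergence along any time sequence. To upgrade this to genuine $C^\infty$ convergence and identify the limit, I would exhibit a monotone Mabuchi-type energy adapted to the right-hand side $F^{n-2}$, whose critical points are exactly Ricci-flat K\"ahler potentials; Yau's uniqueness theorem then singles out $\omega_\infty$, and simultaneously $F(\varphi)\to\mathrm{const}$, so that $\omega(t)\to\omega_\infty$ in $C^\infty$.

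The main technical obstruction I expect is the $C^2$-estimate, because the conformal factor $F(\varphi)$ contributes nonlinear lower-order terms that must be absorbed into the Yau quantity; a suitable test function combining $\mathrm{tr}_{\chi_\varphi}\chi$ with a convex function of $\varphi$ (or of $F$) should overcome this. As an alternative route, one can invoke the conformal equivalence with Ustinovskiy's Hermitian curvature flow from \cite{AF9}: under this equivalence the hypothesis on $\omega_0$ translates precisely to \lq\lq Ustinovskiy's flow starting at the K\"ahler metric $\chi$\rq\rq, and the statement then reduces to Ustinovskiy's K\"ahler-class preservation and convergence results on Calabi-Yau backgrounds.
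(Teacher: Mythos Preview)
The paper does not contain a proof of this statement: Theorem~\ref{phong} is quoted verbatim from \cite[Theorem 2]{AF7} as motivation, and no argument for it is given here. The paper's own contribution is Theorem~\ref{main} (stability of the flow near Calabi--Yau metrics), proved via the abstract stability Theorem~\ref{stabilityham}. So there is nothing in this paper to compare your proposal against.

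That said, your sketch is essentially the strategy of the original source \cite{AF7}. In the notation of Lemma~\ref{lemma31}, the hypothesis $|\Omega|_{\omega_0}\omega_0^{n-1}=\chi^{n-1}$ says $\tilde\omega_0=\chi$, and the ansatz $\tilde\omega(t)=\chi_\varphi:=\chi+i\partial\bar\partial\varphi$ reduces the rewritten flow \eqref{phongflow2} to the scalar equation
\[
\partial_t\varphi=\frac{1}{(n-1)|\Omega|_\chi^2}\,\frac{\chi_\varphi^n}{\chi^n}+C(t),
\]
exactly as you describe (using $d\chi_\varphi=0$ and $|\Omega|_{\chi_\varphi}^{-2}=|\Omega|_\chi^{-2}\,\chi_\varphi^n/\chi^n$). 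One point to be careful about: the preservation of the Aeppli class of $\tilde\omega^{n-1}$ does \emph{not} by itself force $\tilde\omega(t)$ to be K\"ahler; rather, one first solves the scalar equation, checks that $\chi_\varphi$ furnishes a solution of \eqref{phongflow2}, and then invokes the uniqueness coming from Theorem~\ref{Ham_int} to conclude that this is \emph{the} solution. Your second-to-last paragraph acknowledges this consistency check, so the logic is fine. The remaining analytic steps (Cao--Yau type $C^0$, $C^2$, and higher estimates for this parabolic Monge--Amp\`ere equation, plus convergence) are indeed what \cite{AF7} carries out.
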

The theorem gives an alternative proof of the Calabi-Yau theorem \cite{yau}. In \cite{AF7} it is raised the problem of studying the convergence of the flow in K\"ahler manifolds when $|\Omega|_{\omega_0}^{1/(n-1)}\omega_0$ is not K\"ahler, for instance when $|\Omega|_{\omega_0}\omega^{n-1}_0$ is just closed and $[|\Omega|_{\omega_0}\omega^{n-1}_0]=[\chi^{n-1}]$, with $\chi$ K\"ahler.  
Here we prove that \eqref{phongflow} is stable around Calabi-Yau metrics $\chi$ with $|\Omega|_{\chi}$ constant. In particular we have the convergence of \eqref{phongflow}, when $|\Omega|_{\omega_0}\omega^{n-1}_0$ is just close to the $(n-1)$-th power of a K\"ahler metric.

\begin{theorem}\label{main}
Let $(M, \chi)$ be a compact K\"ahler manifold with vanishing first Chern class and let $\Omega$ be a complex volume form on  $M$ with constant norm with respect to $\chi$. For every $\epsilon>0$ there exists $\delta>0$ such that if $\omega_0$ is a Hermitian metric on $M$ satisfying 
\begin{equation}\label{vicino}
\|\,|\Omega|_{\omega_0}\omega^{n-1}_0-\chi^{n-1}\|_{C^{\infty}}<\delta\,, 
\end{equation}
then flow \eqref{phongflow} has a long-time solution $\omega(t)\in C^{\infty}(M\times [0,\infty),\Lambda_{+}^{1,1})$ such that 
$$
\|\,|\Omega|_{\omega(t)}\omega(t)^{n-1}-\chi^{n-1}\|_{C^{\infty}}<\epsilon\,,\mbox{ for every }t\in [0,\infty)
$$
and $|\Omega|_{\omega(t)}\omega(t)^{n-1}$ converges in $C^{\infty}$--topology to a positive $(n\!-\!1,n\!-\!1)$-form $\omega_{\infty}^{n-1}$ with $\omega_{\infty}$ astheno-K\"ahler. 

If further $\omega_0$ satisifies the conformally balanced condition $d(|\Omega|_{\omega_0}\omega_{0}^{n-1})=0$, then $\omega_\infty$ is K\"ahler Ricci flat.
\end{theorem}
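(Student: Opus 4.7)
My plan is to recast \eqref{phongflow} as a quasilinear second-order parabolic evolution for the positive $(n-1,n-1)$-form $\Phi(t):=|\Omega|_{\omega(t)}\omega(t)^{n-1}$. By Michelsohn's correspondence, $\Phi$ determines $\omega$ pointwise and algebraically, so $\omega^{n-2}=G(\Phi)$ is a smooth nonlinear map defined on positive $(n-1,n-1)$-forms, and the flow becomes
$$
\partial_t\Phi \,=\, i\partial\bar\partial\, G(\Phi),\qquad \Phi(0)=|\Omega|_{\omega_0}\omega_0^{n-1}.
$$
Since the right-hand side is $d$-exact, one has the conservation law $d\Phi(t)\equiv d\Phi(0)$; in particular the conformally balanced condition $d\Phi=0$ is preserved along the flow. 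This reformulation also frees us from assuming $\omega_0$ conformally balanced when establishing short-time existence.

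Next I would linearize at the stationary solution $\Phi_\infty=\chi^{n-1}$. Writing $\Phi=\chi^{n-1}+\Psi$, the linearized equation takes the form
$$
\partial_t\Psi \,=\, i\partial\bar\partial\bigl(L_0\Psi\bigr)+\mathcal{Q}(\Psi),
$$
where $L_0:=DG|_{\chi^{n-1}}$ is a pointwise linear isomorphism from $(n-1,n-1)$-forms to $(n-2,n-2)$-forms and $\mathcal{Q}$ is the quadratic remainder. Using Kähler identities on the Ricci-flat background $\chi$, one computes $L_0$ explicitly in terms of the Hodge star of $\chi$ and shows that the principal operator $P\Psi:=i\partial\bar\partial(L_0\Psi)$ is strongly elliptic and, with respect to a suitable $L^2$-inner product, self-adjoint and non-positive; its kernel is precisely the space of infinitesimal astheno-Kähler deformations, i.e.\ those $\Psi$ for which $L_0\Psi$ is $\partial\bar\partial$-closed. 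Hodge theory on the compact Kähler manifold $(M,\chi)$ then provides a spectral gap and a uniform coercivity estimate transverse to the kernel.

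Nonlinear stability then follows by a Lyapunov/Simonett-type argument. Orthogonally decomposing $\Psi$ separates a finite-dimensional "slow" component tangent to the manifold of nearby astheno-Kähler stationary solutions from a complementary "fast" component which decays exponentially in high Sobolev norms through a standard energy estimate combined with parabolic Schauder bounds and smoothing. The slow component remains small and is pinned down by the conservation $d\Psi(t)=d\Psi(0)$. Bootstrapping upgrades low-norm closeness to $C^\infty$-convergence of $\Phi(t)$ to a limit $\Phi_\infty=\omega_\infty^{n-1}$ with $\omega_\infty$ astheno-Kähler. If in addition $\omega_0$ is conformally balanced, the conservation $d\Phi\equiv 0$ forces $\omega_\infty^{n-1}$ to be closed, so $\omega_\infty$ is simultaneously balanced and astheno-Kähler; being $C^\infty$-close to $\chi$, this forces $\omega_\infty$ to be Kähler, and the uniqueness part of the Calabi-Yau theorem \cite{yau} identifies it as the Ricci-flat representative of $[\chi]$.

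The principal obstacle is the rigorous justification of the coercivity estimate and the identification of $\ker P$ with the infinitesimal astheno-Kähler deformations of $\chi$. Explicitly computing $L_0$ via the Michelsohn correspondence and matching it with the Kähler/Ricci-flat data of $\chi$ (so that the principal symbol of $P$ is of the expected sign on the complement of the kernel) is the technical heart of the argument. A secondary subtlety is verifying that the conservation $d\Phi(t)=d\Phi(0)$ interacts correctly with the harmonic/non-harmonic decomposition so that the slow component cannot drift away; once these ingredients are in place, standard parabolic machinery (short-time existence for the $\Phi$-equation, energy estimates, Sobolev embeddings, interpolation) delivers the stability conclusion.
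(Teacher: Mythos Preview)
Your overall plan---rewrite the flow as an evolution of $\Phi=|\Omega|_{\omega}\omega^{n-1}$, linearize at $\chi^{n-1}$, show non-positivity and a spectral gap, then obtain exponential decay and convergence---is exactly the strategy the paper follows. The paper makes the change of variable $\tilde\omega^{n-1}=|\Omega|_{\omega}\omega^{n-1}$, computes that on \emph{closed} $(n-1,n-1)$-forms one has $DE(\chi^{n-1})=-\tfrac{1}{n-1}|\Omega|_{\chi}\,\square$, and then feeds this into a general stability theorem (their Theorem~\ref{stabilityham}) proved via Hamilton's Nash--Moser machinery.

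There is, however, a genuine gap in your outline. The operator $P\Psi=i\partial\bar\partial(L_0\Psi)$ is \emph{not} strongly elliptic: its image is contained in the $d$-exact (indeed $i\partial\bar\partial$-exact) $(n-1,n-1)$-forms, so for every nonzero $\xi$ the symbol $\sigma P(\xi)$ has range inside $\xi\wedge\Lambda^{2n-3}$ and is never surjective. Consequently ``standard parabolic machinery'' and ``parabolic Schauder bounds'' do not apply to the $\Phi$-equation as written. This is precisely why the paper works in Hamilton's framework with the integrability condition $L=d$: one only needs positivity of the symbol restricted to $\ker\sigma L(\xi)$, and short-time existence is obtained via Nash--Moser rather than through a direct parabolic argument. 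Your conservation law $d\Phi(t)\equiv d\Phi(0)$ is the shadow of this, but by itself it does not supply well-posedness or smoothing; you still have to justify existence and higher-order estimates for a flow whose linearization is only ``elliptic modulo $d$''.

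Two smaller points. First, once you restrict to closed forms and compute the linearization honestly, it \emph{is} a constant multiple of the Hodge Laplacian (Lemmas~\ref{lap} and~\ref{lin_E} in the paper); the kernel is then the harmonic $(n-1,n-1)$-forms, and the crucial orthogonality $E(f)\perp\ker DE(\chi^{n-1})$ follows immediately from Hodge theory since $E(f)$ is $i\partial\bar\partial$-exact. This replaces your center-manifold/Simonett decomposition by a single clean hypothesis. Second, in the conformally balanced case the limit satisfies $d(|\Omega|_{\omega_\infty}\omega_\infty^{n-1})=0$ (conformally balanced, not balanced) together with $\partial\bar\partial\omega_\infty^{n-2}=0$; the conclusion that $\omega_\infty$ is K\"ahler Ricci-flat is a known algebraic fact (see \cite[Lemma~1]{AF7} and \cite{Matsuo}) and does not rely on $C^\infty$-closeness to $\chi$.
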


We prove Theorem \ref{main} as follows: 

\smallskip 
after the change of variable $|\Omega|_{\omega}\omega^{n-1} = \tilde\omega^{n-1}$, equation \eqref{phongflow} rewrites as 
\begin{equation}
\label{ourflow}
\tfrac{\partial}{\partial t} \tilde\omega^{n-1}=i\partial\bar\partial (|\Omega|_{ \tilde\omega}^{-2} \tilde\omega^{n-2})\,,
\end{equation}
(see Lemma \ref{lemma31}). 

If we set $E(\tilde\omega^{n-1})=i\partial\bar\partial (|\Omega|_{ \tilde\omega}^{-2} \tilde\omega^{n-2})$, then the linearization $DE(\alpha^{n-1})$ of $E$ at any positive $(n-1,n-1)$-real form $\alpha^{n-1}$ satisfies 
$$
DE(\alpha^{n-1})(\psi)=-\frac{1}{n-1}|\Omega|_{\alpha}\square \psi+{\rm l.o.t.}
$$ 
for any {\em closed} $\psi\in C^{\infty}(M,\Lambda^{n-1,n-1}_\R)$ (see Lemma \ref{lin_E}) and the flow \eqref{ourflow} fits in Hamilton's framework \cite[Section 5 and 6]{positive} with integrability condition $L=d$ (this is analogous to the argument used in \cite{AF7}). That in particular implies the well-posedness of the flow \eqref{phongflow} also when $\omega_0$ is not conformally balanced.   

Moreover if $\chi$ is K\"ahler, then 
$$
DE(\chi^{n-1})(\psi)=-\frac{1}{n-1}|\Omega|_{\omega}\square \psi
$$
for any closed $\psi\in C^{\infty}(M,\Lambda^{n-1,n-1}_\R)$. This allows us to apply a general result about the stability of second order geometric flows with an integrability condition. We state this theorem in section \ref{description} and we prove it in the last section.  

\bigskip 
\noindent{\em Notation.} Given a vector bundle $F$ on a manifold $M$, we denote by $C^{\infty}(M,F)$ the space of smooth sections of $F$. If further $I\subset \R$ is an interval we denote by $C^{\infty}(M\times I,F)$
the space of smooth time depending sections of $F$. When we write $\|f\|_{C^{\infty}}<\delta$, we mean that $\|f\|_{C^{k}}<\delta$ for every $k\in \mathbb N$. 

\section{A stability result for  second order geometric flows with an integrability condition} \label{description}
In \cite{positive} Hamilton proved the following general result about the short-time existence of second order geometric flows on compact manifolds.

\medskip 
Let $M$ be an oriented compact manifold, $F$ a vector bundle over $M$, $U$ an open subbundle of $F$ and  
$$
E\colon C^{\infty}(M,U)\to  C^{\infty}(M,F)
$$
a second order differential operator. Consider the geometric flow 
\begin{equation}\label{flow_Ham}
\frac{\partial f}{\partial t}=E(f)\,,\quad f(0)=f_0\,,
\end{equation}
where $f_0$ belongs to $C^{\infty}(M,U)$. 
For $f\in C^{\infty}(M,U)$, we denote by $D E(f)\colon C^{\infty}(M,F)\to C^{\infty}(M,F)$ the linearization of $E$ at $f$ and by 
$\sigma D E(f)$ the principal symbol of
$D E(f)$. Following Hamilton's paper we assume that there exists a first order linear differential operator 
$$ 
L\colon C^{\infty}(M,F)\to C^{\infty}(M,G)\,,
$$
with values  in another vector bundle $G$ over $M$, such that 
\begin{enumerate}
\item[1.] $L(E(f))=0$ for all $f\in C^{\infty}(M,U)$;

\vspace{0.1cm}
\item[2.] for every $f\in C^{\infty}(M,U)$ and for every $(x,\xi) \in T^*M$ all the eigenvalues of $\sigma DE(f)(x,\xi)$ restricted to $\ker \sigma L(x, \xi)$
 have strictly positive real part\,.
 \end{enumerate}
Because of the following result $L$ is called an {\em integrability condition for $E$}.

\begin{theorem}[Hamilton {\cite[Theorem 5.1]{positive}} ]\label{Ham_int}
Under the above  assumptions the geometric flow \eqref{flow_Ham} has a unique short-time solution.   
\end{theorem}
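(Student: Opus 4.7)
The plan is to adapt the DeTurck gauge-fixing trick. Condition~2 makes $DE(f)$ strictly elliptic only after restriction to $\ker\sigma L$; this reflects the gauge degeneracy forced by the integrability condition~1. I will break the gauge by adding a second-order term that restores ellipticity transverse to $\ker\sigma L$, solve the resulting strictly parabolic equation by standard means, and then show that the added term vanishes along the flow, so that what has been produced is in fact a solution of the original problem.

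First, fix a reference Riemannian metric on $M$ and bundle metrics on $F$ and $G$, and let $L^{*}\colon C^{\infty}(M,G)\to C^{\infty}(M,F)$ be the corresponding formal adjoint. Then $L^{*}L$ is a second-order self-adjoint operator on $C^{\infty}(M,F)$ whose principal symbol $\sigma L^{*}(x,\xi)\,\sigma L(x,\xi)$ is non-negative and vanishes precisely on $\ker\sigma L(x,\xi)$. For a constant $C>0$ to be chosen, define the modified operator
\[
\widetilde E(f):=E(f)+C\,L^{*}\bigl(L(f)-L(f_0)\bigr).
\]
The principal symbol of $D\widetilde E(f)$ at $(x,\xi)$ equals $\sigma DE(f)+C\,\sigma L^{*}\sigma L$. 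Writing $F_{x}=\ker\sigma L(x,\xi)\oplus(\ker\sigma L(x,\xi))^{\perp}$, condition~2 makes the block on $\ker\sigma L$ have eigenvalues with strictly positive real parts, while $C\,\sigma L^{*}\sigma L$ is positive definite on the complement. A block-matrix perturbation argument, uniform in $(x,\xi)$ on the compact unit cosphere bundle, shows that for $C$ large enough every eigenvalue of $\sigma D\widetilde E(f)(x,\xi)$ has strictly positive real part. Hence $\partial_t f=\widetilde E(f)$, $f(0)=f_0$, is strictly parabolic in the sense of Petrovski\u{\i}, and standard short-time existence for quasilinear parabolic systems on compact manifolds (a contraction-mapping argument in parabolic H\"older or $L^{p}$-Sobolev spaces, followed by parabolic regularity) provides a unique smooth solution $\widetilde f\in C^{\infty}(M\times[0,T),U)$ on some maximal interval.

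To recover a solution of the original flow I show that the gauge term vanishes along $\widetilde f$. Setting $u(t):=L\widetilde f(t)-Lf_0$, condition~1 gives $L(E(\widetilde f))=0$, whence
\[
\frac{\partial u}{\partial t}=L\widetilde E(\widetilde f)=L(E(\widetilde f))+C\,LL^{*}u=C\,LL^{*}u,\qquad u(0)=0.
\]
Since $LL^{*}$ is self-adjoint with discrete spectrum on the compact manifold $M$, a spectral expansion $u=\sum_k a_k(t)\psi_k$ reduces this to the scalar ODEs $a_k'=C\lambda_k a_k$ with $a_k(0)=0$, whence $u\equiv0$. Consequently $L^{*}(L\widetilde f-Lf_0)\equiv0$ and $\widetilde f$ satisfies $\partial_t f=E(f)$. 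Uniqueness for \eqref{flow_Ham} is then immediate: any solution $f$ of the original flow obeys $Lf(t)=Lf_0$ by condition~1, hence solves the modified flow, which is uniquely solvable.

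The main obstacle is the strict parabolicity check in Step~1, that is, upgrading the partial ellipticity supplied by condition~2 to an estimate valid on the \emph{whole} of $F$, and not only on $\ker\sigma L$. Since $\sigma DE$ need not be self-adjoint and the off-diagonal coupling between $\ker\sigma L$ and $(\ker\sigma L)^{\perp}$ need not vanish, one must ensure that these cross-terms are dominated by the gauge term $C\,\sigma L^{*}\sigma L$ uniformly on the unit cosphere bundle; this is the point at which condition~2 is used in an essential way, and is the technical heart of Hamilton's argument.
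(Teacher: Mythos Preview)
The paper does not prove this statement itself; it quotes Hamilton, whose argument (recalled in the proof of Theorem~\ref{stabilityham}) runs through the Nash--Moser inverse function theorem applied to $\mathcal F(f)=(\partial_t f-E(f),\,f(0))$, with the work going into tame estimates for the linearized problem. No DeTurck-type modification is used, so your gauge-fixing route is genuinely different. Incidentally, the parabolicity of $\widetilde E$ is easier than you indicate: differentiating $L(E(f))=0$ gives $L\circ DE(f)=0$, so $\sigma DE(f)$ already has image in $\ker\sigma L$; in your block decomposition the lower-left block of $\sigma DE(f)$ therefore vanishes, $\sigma D\widetilde E(f)$ is block upper-triangular, and \emph{any} $C>0$ works. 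This is not where the difficulty lies.

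The step with a real gap is the vanishing of $u=L\widetilde f-Lf_0$. You assert that $LL^{*}$ has discrete spectrum on a compact manifold, but that holds only when $LL^{*}$ is elliptic, i.e.\ when $\sigma L$ is surjective---an assumption not made here and false already for $L=d$ into $\Lambda^{2n-1}_{\mathbb C}$ in the paper's own application. With the sign of $C$ forced by parabolicity, $\partial_t u=C\,LL^{*}u$ is of backward-heat type and the energy identity $\tfrac{d}{dt}\|u\|^{2}=2C\|L^{*}u\|^{2}\ge 0$ gives no decay. The conclusion can be repaired: take the Friedrichs self-adjoint extension $\hat A\ge 0$ of $LL^{*}$, note that $C^{\infty}\subset\mathrm{Dom}(\hat A)$ with $\hat A=LL^{*}$ there, and use the bounded spectral projections $P_n=\chi_{[0,n]}(\hat A)$; each $P_nu$ then solves a linear evolution with bounded generator and zero initial datum, hence vanishes, and letting $n\to\infty$ gives $u\equiv 0$. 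But the eigenfunction expansion you invoke is not available in general, and this step---not the symbol computation---is where the genuine care is required.
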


\begin{rem} {\em Theorem 5.1 in \cite{positive} is in fact more general since the integrability condition $L$ is allowed to smoothly depend on $f \in C^\infty(M,U)$. This generality is needed to prove the short time existence of the Ricci flow.}
\end{rem}

Using Theorem \ref{Ham_int} we will be able to prove the following stability theorem for geometric flows with an integrability condition $L$.

\begin{theorem}\label{stabilityham}
Assume that $E$ and $L$ are as above.
Let $\bar f\in C^{\infty}(M,U)$ be such that $E(\bar f)=0$. Let $\bar h$ be a fixed metric along the fibers of $F$. Assume
\begin{enumerate}[(i)]
\item \label{semidef} $D E(\bar f)\colon \ker L \to \ker L$ is symmetric and negative semidefinite with respect to $\bar h$;
\item \label{miracolo} $E(f)$ is $L^2$-orthogonal to $\ker D E(\bar f)$ for every $f \in C^\infty(M,U)$;
\item $D E(\bar f)\colon \ker L \to \ker L$ extends to an elliptic operator $\Phi: C^\infty(M,F) \to C^\infty(M,F)$.
\end{enumerate}
Then for every $\epsilon>0$ there exists $\delta>0$ such that if 
$f_0\in C^{\infty}(M,U)$ satisfies 
$$
\|f_0-\bar f\|_{C^\infty}<\delta, 
$$
then \eqref{flow_Ham} has a long-time solution $f\in C^{\infty}(M\times [0,\infty),U)$ such that 
$$
\|f(t)-\bar f\|_{C^	\infty}<\epsilon, \mbox{ for every $t\in[0,\infty)$}.  
$$
Moreover, $f(t)$ converges to $f_{\infty}\in C^{\infty}(M,U)$ in $C^{\infty}$--topology which satisfies $E(f_{\infty})=0$.  
\end{theorem}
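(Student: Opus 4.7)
My proof plan for Theorem \ref{stabilityham} follows the classical linearization-around-equilibrium strategy for nonlinear parabolic problems. The first step is to analyse the spectrum of $\mathcal L := DE(\bar f)$. Differentiating the identity $L\circ E\equiv 0$ at $\bar f$ yields $L\circ \mathcal L=0$, so the image of $\mathcal L$ is contained in $\ker L$. Combined with hypotheses (i) and (iii), this shows that $\mathcal L$ restricts to a self-adjoint, elliptic, negative semidefinite operator on the $L^2$-closure of $\ker L$ with respect to $\bar h$ and a fixed volume form. In particular its spectrum is discrete, its kernel $K:=\ker\mathcal L\cap\ker L$ is finite dimensional, and there exists a spectral gap $\lambda>0$ such that
\begin{equation}
\langle-\mathcal L\,\xi,\xi\rangle_{\bar h}\geq\lambda\,\|\xi\|_{\bar h}^{\,2}\qquad \text{for every }\xi\in \ker L\cap K^{\perp}.
\end{equation}
By the integrability condition $L\circ E=0$ and hypothesis (ii), the velocity $E(f)$ always lies in $\ker L\cap K^{\perp}$, so this spectral estimate applies pointwise in time along any solution of \eqref{flow_Ham}.

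\medskip

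The second step is a nonlinear energy estimate on the maximal time interval $[0,T)$ on which the short-time solution provided by Theorem \ref{Ham_int} satisfies $\|f(t)-\bar f\|_{C^2}\leq \eta$, for a small threshold $\eta$ to be fixed. I would introduce the Sobolev hierarchy $\mathcal E_k(t):=\|E(f(t))\|_{H^k}^{\,2}$ and differentiate in time using $\partial_t f=E(f)$; at the lowest order this gives
\begin{equation}
\tfrac{d}{dt}\mathcal E_0 = 2\,\langle DE(f)\,E(f),E(f)\rangle_{\bar h} = -2\lambda\,\mathcal E_0 + 2\,\langle(DE(f)-\mathcal L)\,E(f),E(f)\rangle_{\bar h} + \mathrm{l.o.t.}
\end{equation}
Since $DE(f)-\mathcal L$ is a second-order operator with coefficients vanishing at $f=\bar f$, a G{\aa}rding-type integration by parts bounds the error term by $C\,\eta\,(\mathcal E_0+\mathcal E_1)$. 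Iterating the estimate at higher Sobolev orders and using that $\mathcal L$ is elliptic (hypothesis (iii)), I expect to close the system
\begin{equation}
\tfrac{d}{dt}\sum_{k\leq N}\mathcal E_k\leq -\lambda\sum_{k\leq N}\mathcal E_k
\end{equation}
for $N$ large enough and $\eta$ small enough, yielding the exponential decay $\|E(f(t))\|_{H^N}\leq e^{-\lambda t/2}\|E(f_0)\|_{H^N}$.

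\medskip

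The last step is a continuation/bootstrap argument. From $\partial_t f=E(f)$ and the decay above one gets
\begin{equation}
\|f(t)-f_0\|_{H^N}\leq\tfrac{C}{\lambda}\|E(f_0)\|_{H^N},
\end{equation}
and $f(t)$ is Cauchy in $H^N$ as $t\to\infty$. Choosing $\delta=\delta(\eta)$ sufficiently small in \eqref{flow_Ham} keeps $\|f(t)-\bar f\|_{C^2}<\eta$ on $[0,T)$, which via the openness of $U$ and the continuation criterion from Theorem \ref{Ham_int} forces $T=+\infty$; passing to the limit produces $f_\infty\in H^N$ with $E(f_\infty)=0$ by continuity. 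Repeating the same argument with larger $N$ upgrades the convergence to $C^\infty$, and choosing $\delta$ depending on $\epsilon$ delivers the final quantitative statement.

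\medskip

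The main obstacle in this plan is the coupling between the perturbative term $DE(f)-\mathcal L$, which involves two derivatives of the unknown, and the spectral-gap estimate, which only controls $\|E(f)\|_{L^2}$. The remedy is to run the energy argument at enough Sobolev levels simultaneously so that the quadratic errors at level $\mathcal E_k$ are absorbed into the linear decay term at level $\mathcal E_{k+1}$ through the small factor $\eta$. Hypothesis (ii) is essential throughout: without it, $E(f(t))$ could have a nondecaying component along the kernel $K$, breaking both the exponential estimate and the continuation argument.
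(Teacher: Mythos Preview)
Your overall architecture matches the paper's: spectral gap on $\ker L\cap K^\perp$, exponential decay of $E(f)$, integration in time to produce a Cauchy limit $f_\infty$, and iteration to all Sobolev levels. In particular your use of $L\circ\mathcal L=0$ and hypothesis~(ii) to place $E(f(t))$ in the subspace where the gap estimate (1) applies is exactly the paper's mechanism for the $L^2$ decay.

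The genuine difference is in Step~2. The paper does \emph{not} attempt simultaneous $H^k$ energy estimates. It first proves only the $L^2$ decay $\|E(f(t))\|_{L^2}\le e^{-\lambda t}\|E(f_0)\|_{L^2}$ from the spectral gap, and then upgrades this to $H^n$ by a separate \emph{parabolic smoothing} argument: using Hamilton's tame estimates for the linearized equation $\partial_t g=DE(f)g$ together with a cutoff-in-time trick, one shows inductively that $\|E(f)\|_{m,[t,T+\tau]}\le C\|E(f)\|_{0,[t-\tau_0,T+\tau]}$ in the anisotropic parabolic norms, and then invokes the parabolic Sobolev embedding to get $\|E(f(t))\|_{H^n}\le Ce^{-\lambda t}\|E(f_0)\|_{L^2}$. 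The spectral gap is never used above the $L^2$ level. Long-time existence is then obtained not via a continuation criterion (Theorem~\ref{Ham_int} supplies none) but by iterating the short-time existence on successive intervals of length $T-\tau_0$, checking at each step that the decay keeps $f$ inside the required $\delta'$-neighbourhood of~$\bar f$.

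Your direct energy approach can be made to work, but the sentence ``I expect to close the system'' hides the real issue, and it is not quite the one you flag in your final paragraph. At level $k>0$ the section $\nabla^k E(f)$ no longer lies in $\ker L\cap K^\perp$, so the spectral estimate (1) is unavailable there; the dissipation must instead come from G\aa rding's inequality for the elliptic operator $\mathcal L$, which only gives $\langle\mathcal L u,u\rangle_{H^k}\le -c\|u\|_{H^{k+1}}^2+C\|u\|_{L^2}^2$. Absorbing the error $\langle(DE(f)-\mathcal L)u,u\rangle_{H^k}=O(\eta)\|u\|_{H^{k+1}}^2$ into $-c\|u\|_{H^{k+1}}^2$ is fine for $\eta$ small, but the residual $+C\|u\|_{L^2}^2$ does not self-absorb; you must feed the $L^2$ spectral gap back in, e.g.\ by working with a weighted norm $\|u\|_{H^k}^2+A\|u\|_{L^2}^2$ with $A\gg C/\lambda$, to obtain a genuine differential inequality at level~$k$. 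Without this step (or an interpolation against an a~priori higher-norm bound) your system does not close, and the paper's smoothing estimate is precisely what replaces it.
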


\section{Proof of Theorem \ref{main}}
Let $(M,\omega_0)$ be a compact  $n$-dimensional  Hermitian manifold with vanishing first Chern class and  let $\Omega$ be a fixed complex volume form. 
\begin{lemma}
\label{lemma31}
Let $\omega(t)$ be a solution to the geometric flow \eqref{phongflow} on $M$;  then $\tilde{\omega}=|\Omega|_{\omega}^{1/(n-1)}\omega$ satisfies 
\begin{equation}\label{phongflow2}
\tfrac{\partial}{\partial t}\tilde \omega^{n-1}=i\partial\bar\partial (|\Omega|_{\tilde \omega}^{-2}\tilde \omega^{n-2})\,.
\end{equation}
\end{lemma}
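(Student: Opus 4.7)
The plan is to reduce the identity to a purely algebraic manipulation using the conformal transformation rule for $|\Omega|$. First I observe that by the very definition of $\tilde\omega$, we have $\tilde\omega^{n-1}=|\Omega|_{\omega}\omega^{n-1}$, so the left-hand side of \eqref{phongflow2} coincides with the left-hand side of \eqref{phongflow} and is therefore equal to $i\partial\bar\partial\omega^{n-2}$. Hence the lemma reduces to the pointwise identity
\begin{equation*}
\omega^{n-2}=|\Omega|_{\tilde\omega}^{-2}\,\tilde\omega^{n-2}.
\end{equation*}

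To establish this, I would use the standard normalization $i^{n^{2}}\Omega\wedge\bar\Omega=|\Omega|_{\omega}^{2}\,\omega^{n}/n!$, which gives the conformal rescaling rule: if $\tilde\omega=f\omega$ with $f>0$, then $\tilde\omega^{n}=f^{n}\omega^{n}$ forces
\begin{equation*}
|\Omega|_{\tilde\omega}^{2}=f^{-n}|\Omega|_{\omega}^{2}.
\end{equation*}
Plugging in the specific conformal factor $f=|\Omega|_{\omega}^{1/(n-1)}$ yields
\begin{equation*}
|\Omega|_{\tilde\omega}^{2}=|\Omega|_{\omega}^{2-n/(n-1)}=|\Omega|_{\omega}^{(n-2)/(n-1)}.
\end{equation*}

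Finally I rewrite $\omega$ in terms of $\tilde\omega$ as $\omega=|\Omega|_{\omega}^{-1/(n-1)}\tilde\omega$, so that
\begin{equation*}
\omega^{n-2}=|\Omega|_{\omega}^{-(n-2)/(n-1)}\tilde\omega^{n-2}=|\Omega|_{\tilde\omega}^{-2}\tilde\omega^{n-2},
\end{equation*}
using the identity just derived. Combining this with the first step gives $\tfrac{\partial}{\partial t}\tilde\omega^{n-1}=i\partial\bar\partial(|\Omega|_{\tilde\omega}^{-2}\tilde\omega^{n-2})$, as claimed. There is no genuine obstacle here: the only point deserving care is bookkeeping the exponent $(n-2)/(n-1)$ consistently, so that the powers of $|\Omega|_{\omega}$ collapse exactly into $|\Omega|_{\tilde\omega}^{-2}$.
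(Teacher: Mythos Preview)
Your argument is correct and follows essentially the same route as the paper: both proofs rewrite $\omega^{n-2}$ in terms of $\tilde\omega$ via $\omega=|\Omega|_{\omega}^{-1/(n-1)}\tilde\omega$ and then use the conformal scaling rule for $|\Omega|$ to convert the resulting power of $|\Omega|_{\omega}$ into $|\Omega|_{\tilde\omega}^{-2}$. Your presentation is slightly more streamlined in that you isolate the pointwise identity $\omega^{n-2}=|\Omega|_{\tilde\omega}^{-2}\tilde\omega^{n-2}$ up front and work with $|\Omega|^{2}$ throughout, but the underlying computation is the same.
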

\begin{proof}
Since 
$$
\omega=|\Omega|_{\omega}^{-1/(n-1)}\tilde{\omega}\,,
$$
we have 
$$
\tfrac{\partial}{\partial t}(\tilde{\omega}^{n-1})=i\partial\bar\partial(|\Omega|_{\omega}^{-(n-2)/(n-1)} \tilde{\omega}^{n-2})\,.
$$
Now in general  for any conformal factor $f\in C^{\infty}(M,\R_{+})$ one has
$$
|\Omega|_{f\tilde{\omega}}=f^{-n/2} |\Omega|_{\tilde{\omega}}
$$
and thus
$$
 |\Omega|_{\omega}=(|\Omega|_{\omega}^{-1/(n-1)})^{-n/2} |\Omega|_{\tilde{\omega}}=
 |\Omega|_{\omega}^{n/(2n-2)} |\Omega|_{\tilde{\omega}}
$$
from which we deduce 
$$
|\Omega|_{\omega}= |\Omega|_{\tilde{\omega}}^{(2n-2)/(n-2)}\,,
$$
and the claim follows.
\end{proof}

Now we focus on the geometric flow \eqref{phongflow2} and we show that it fits in the set-up of Theorem \ref{stabilityham}. The flow is governed by the operator 
$$
E\colon C^{\infty}(M,\Lambda^{n-1,n-1}_{+})\to C^{\infty}(M,\Lambda^{n-1,n-1}_{\R})
$$
defined by 
\begin{equation}\label{EEE}
E(\omega^{n-1})=i\partial \bar \partial (|\Omega|_{\omega}^{-2}\,\omega^{n-2})
\end{equation}
where $\Lambda^{n-1,n-1}_{+}$ is the bundle of positive real $(n\!-\!1,n\!-\!1)$-forms on $M$ and $\Lambda^{n-1,n-1}_{\R}$ is the bundle of real $(n\!-\!1,n\!-\!1)$-forms.

\medskip 
In order to study the linearization of $E$, we describe the principal part of the operator $\square$ in terms of the components of $(n\!-\!1,n\!-\!1)$-real forms on $M$. 

Let $\omega$ be any Hermitian metric on $M$ and $\psi\in C^{\infty}(M,\Lambda_{\R}^{n-1,n-1})$. Then $\psi$ writes in a unique way as
\begin{equation}\label{psi}
\psi=\frac{1}{(n-1)!}h_0\omega^{n-1}-\frac{1}{(n-2)!}h_2\wedge\omega^{n-2}\,,
\end{equation}
where $h_0$ is a smooth function and $h_2\in C^{\infty}(M,\Lambda^{1,1}_{\R})$ satisfies 
$$
h_2	\wedge \omega^{n-1}=0\,. 
$$
Since
$$
*h_2=-\frac{1}{(n-2)!}h_2\wedge\omega^{n-2}\,,
$$
the form $\psi$ can be alternatively written as 
$$
\psi =*(h_0\omega +h_2)\,. 
$$

\begin{lemma}\label{lap}
If $\psi$ is closed, then  
$$
\square\,\psi=\partial \partial^*\psi=-\frac{2i}{(n-2)!}\partial \bar\partial h_0\wedge \omega^{n-2}+\frac{i}{(n-3)!}\partial\bar\partial h_2\wedge\omega^{n-3}+{\rm l.o.t.}
$$
where {\rm \lq\lq l.o.t.\rq\rq} stands for \lq\lq lower order terms\rq\rq\, in  $\psi$. Moreover if $\omega$ is K\"ahler we have 
$$
\square\,\psi=\partial \partial^*\psi=-\frac{2i}{(n-2)!}\partial \bar\partial h_0\wedge \omega^{n-2}+\frac{i}{(n-3)!}\partial\bar\partial h_2\wedge\omega^{n-3}\,.
$$
\end{lemma}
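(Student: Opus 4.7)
My plan is to exploit the K\"ahler identity $\partial^* = i[\Lambda,\bar\partial]$, which holds exactly when $\omega$ is K\"ahler and only up to Demailly-type torsion corrections (involving $d\omega$) in the general Hermitian setting. Since $\psi$ is $d$-closed we have $\bar\partial\psi=0$, so $\square\psi=\partial\partial^*\psi$ and the identity reduces to $\partial^*\psi = -i\,\bar\partial\Lambda\psi + \text{l.o.t.}$, with the l.o.t.\ vanishing in the K\"ahler case.

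The first concrete step is to compute $\Lambda\psi$ explicitly. The relation $h_2\wedge\omega^{n-1}=0$ is precisely the primitivity of $h_2$. Applying the standard Lefschetz identity $[\Lambda,L^k]\alpha = k(n-p-q-k+1)L^{k-1}\alpha$ on primitive $(p,q)$-forms to the primitive forms $1$ (bidegree $(0,0)$, $k=n-1$) and $h_2$ (bidegree $(1,1)$, $k=n-2$) yields $\Lambda\omega^{n-1} = 2(n-1)\omega^{n-2}$ and $\Lambda(h_2\wedge\omega^{n-2}) = (n-2)\,h_2\wedge\omega^{n-3}$, whence
\[
\Lambda\psi \;=\; \frac{2}{(n-2)!}\,h_0\,\omega^{n-2} \;-\; \frac{1}{(n-3)!}\,h_2\wedge\omega^{n-3}.
\]
Applying $\partial\bar\partial$ to this formula and multiplying by $-i$ produces the two leading terms in the statement. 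When $\omega$ is K\"ahler, $d\omega = 0$ kills every Leibniz-rule contribution of the form $h_0\,\bar\partial\omega^k$, $\bar\partial h_0\wedge\partial\omega^k$, $h_2\wedge\partial\bar\partial\omega^k$, etc., and the K\"ahler identity is exact, so no error remains.

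In the general Hermitian case these Leibniz-rule leftovers, together with the torsion corrections to $\partial^*=i[\Lambda,\bar\partial]$, involve derivatives of $\omega$ paired with at most a first derivative of $h_0$ or $h_2$, and hence are absorbed into the l.o.t.\ part. I expect the main obstacle to be purely a bookkeeping one: pinning down the sign convention in the K\"ahler identity (the signs displayed in the lemma force the convention $\partial^*=i[\Lambda,\bar\partial]$) and checking carefully that every Hermitian torsion term carries at most one derivative of $\psi$, so that no hidden second-order contribution could spoil the claimed principal part of $\square\psi$.
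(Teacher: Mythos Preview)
Your argument is correct and takes a genuinely different route from the paper's proof. The paper works directly with the Hodge star: it writes $\partial^*\psi=-*\bar\partial*\psi$, uses $*\psi=h_0\omega+h_2$, and then computes $*\bar\partial h_2$ by splitting a $(1,2)$-form $\gamma$ as $\gamma_+ + \gamma_-$ (with $\gamma_+=\alpha\wedge\omega$ and $\gamma_-\wedge\omega^{n-2}=0$) and applying the explicit star formulas $*\gamma_+=\tfrac{i}{(n-2)!}\gamma_+\wedge\omega^{n-3}$, $*\gamma_-=-\tfrac{i}{(n-3)!}\gamma_-\wedge\omega^{n-3}$. The closedness of $\psi$ enters there to identify $(\bar\partial h_2)_+=\tfrac{1}{n-1}\bar\partial h_0\wedge\omega+\text{l.o.t.}$ You instead invoke the K\"ahler identity $\partial^*=i[\Lambda,\bar\partial]$ (with Demailly-type torsion corrections in the Hermitian case), use $\bar\partial\psi=0$ to reduce to $-i\partial\bar\partial\Lambda\psi$, and evaluate $\Lambda\psi$ via the Lefschetz commutator formula. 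Your use of closedness is cleaner (it kills $\Lambda\bar\partial\psi$ in one stroke) and the computation is shorter; the paper's approach is more self-contained in that it avoids citing the Hermitian K\"ahler identities and the torsion correction, relying only on elementary Hodge-star identities. Both yield the same principal part, and in both cases the lower-order terms visibly come from $d\omega$, giving the exact K\"ahler statement.
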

\begin{proof}
Since $\psi$ is closed we have 
$$
\square\,\psi=\partial \partial^*\psi =-\partial *\bar\partial *\psi =
-\partial *(\bar\partial h_0\wedge \omega)-\partial *\bar\partial h_2 +{\rm l.o.t.}
$$  
On the other hand, from the closure of $\psi$, we deduce 
$$
\bar \partial h_2\wedge\omega^{n-2}=\frac{1}{n-1}\bar \partial h_0\wedge \omega^{n-1}+{\rm l.o.t}\,. 
$$
Now we use the well-known splitting of $(1,2)$-forms as
$$\gamma = \gamma_+ + \gamma_-$$
where $\gamma_+$ is of the form $\alpha \wedge \omega$ with $\alpha$ a $(0,1)$-form and $\gamma_-$ is such that $\gamma_- \wedge \omega^{n-2}=0$. \\
The fact we use is that $* \gamma_+ = \frac{i}{(n-2)!} \gamma_+ \wedge \omega^{n-3}$ and $*\gamma_-=- \frac{i}{(n-3)!} \gamma_- \wedge \omega^{n-3}$.
Thus in our case, taking into account that $(\bar\partial h_2)_+=\frac{1}{n-1} \bar\partial h_0 \wedge \omega +{\rm l.o.t.}$,
we have
$$
\begin{aligned}
*\bar\partial h_2=&\frac{i}{(n-1)!}\bar \partial h_0\wedge \omega^{n-2}-\frac{i}{(n-3)!}\left(\bar\partial h_2-\frac{1}{n-1}\bar \partial h_0\wedge \omega\right)\wedge\omega^{n-3}+{\rm l.o.t.}\\
&=\frac{i}{(n-2)!}\bar \partial h_0\wedge \omega^{n-2}-\frac{i}{(n-3)!}\bar\partial h_2\wedge\omega^{n-3}+{\rm l.o.t.}
\end{aligned}
$$
Therefore 
$$
\square\,\psi=-\frac{2i}{(n-2)!}\partial \bar\partial h_0\wedge \omega^{n-2}+\frac{i}{(n-3)!}\partial\bar\partial h_2\wedge\omega^{n-3}+{\rm l.o.t.},
$$
where these lower order terms vanish if $\omega$ is closed since they all come from $d\omega$. \end{proof}

\begin{lemma}
\label{lin_E}
Let $\omega$ be a Hermitian metric on $M$ and let $\psi \in C^{\infty}(M,\Lambda^{n-1,n-1}_{\R})$ be closed; then 
\begin{equation}\label{DEnonK}
DE(\omega^{n-1})(\psi)=-\frac{1}{n-1}|\Omega|_{\omega}\square \psi+{\rm l.o.t.}
\end{equation}
Moreover if we assume that $\omega$ is K\"ahler then we have 
$$
DE(\omega^{n-1})(\psi)=-\frac{1}{n-1}|\Omega|_{\omega}\square \psi\,. 
$$ 
\end{lemma}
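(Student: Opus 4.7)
My plan is to linearize $E$ directly by lifting the variation of $\omega^{n-1}$ to a variation of the Hermitian metric $\omega$, and then to match the resulting principal symbol against the expression for $\square\psi$ from Lemma \ref{lap}. The Lefschetz isomorphism $\alpha\mapsto \alpha\wedge\omega^{n-2}$ sends the variation $\omega(t)^{n-1}=\omega^{n-1}+t\psi+O(t^2)$ to a unique $\dot\omega$ satisfying $(n-1)\dot\omega\wedge\omega^{n-2}=\psi$. Decomposing $\dot\omega=f\omega+\beta$ with $\beta$ primitive, and comparing with the given decomposition $\psi=\frac{1}{(n-1)!}h_0\omega^{n-1}-\frac{1}{(n-2)!}h_2\wedge\omega^{n-2}$, one reads off explicit algebraic (order zero) expressions for $f$ as a multiple of $h_0$ and $\beta$ as a multiple of $h_2$.

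Since $\Omega\wedge\bar\Omega$ is a dimensional constant times $|\Omega|_\omega^2\omega^n$ and $\delta\omega^n=nf\omega^n$ (exploiting primitivity of $\beta$), one obtains $\delta(|\Omega|_\omega^{-2})=nf|\Omega|_\omega^{-2}$; together with $\delta\omega^{n-2}=(n-2)\dot\omega\wedge\omega^{n-3}$, this yields an expression
\begin{equation*}
\delta(|\Omega|_\omega^{-2}\omega^{n-2})=C_1\, h_0\,\omega^{n-2}+C_2\, h_2\wedge\omega^{n-3},
\end{equation*}
where $C_1,C_2$ are scalar factors depending on $n$ and $|\Omega|_\omega$, with $h_0,h_2$ still undifferentiated. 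Applying $i\partial\bar\partial$ and isolating the contributions where both derivatives fall on $h_0$ or $h_2$ produces
\begin{equation*}
DE(\omega^{n-1})(\psi)=C_1\,i\partial\bar\partial h_0\wedge\omega^{n-2}+C_2\,i\partial\bar\partial h_2\wedge\omega^{n-3}+\text{l.o.t.}
\end{equation*}
Lemma \ref{lap} writes the principal part of $\square\psi$ in the same basis, so a direct coefficient comparison identifies this with $-\frac{1}{n-1}|\Omega|_\omega\square\psi$ modulo lower-order terms.

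In the K\"ahler case, $d\omega=0$ implies $\partial\bar\partial\omega^k=0$ for every $k$, so the Leibniz rule collapses $i\partial\bar\partial(g\,\omega^{n-2})$ to $i\partial\bar\partial g\wedge\omega^{n-2}$ with no cross-terms, and analogously for the $h_2$-component. Together with the fact that Lemma \ref{lap} is sharp (no lower-order remainder) in the K\"ahler case, the identity upgrades to an exact equality. The main obstacle will be the careful bookkeeping of the constants produced by the Lefschetz decomposition, the variation of $|\Omega|_\omega^{-2}$, and the Leibniz expansion of $i\partial\bar\partial$, verifying that the coefficients $C_1,C_2$ combine with the formula from Lemma \ref{lap} to produce precisely the factor $-\frac{1}{n-1}|\Omega|_\omega$.
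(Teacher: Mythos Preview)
Your plan is correct and follows essentially the same route as the paper's own proof: the paper also lifts $\psi$ to a variation $\dot\omega$ via the Lefschetz isomorphism (citing \cite[Lemma~2.5]{Crelle} to obtain $\dot\omega=\tfrac{h_0}{(n-1)(n-1)!}\omega-\tfrac{1}{(n-1)!}h_2$), computes $\dot r=\tfrac{n h_0}{(n-1)(n-1)!}r$ exactly as your $\delta(|\Omega|_\omega^{-2})=nf|\Omega|_\omega^{-2}$, expands $i\partial\bar\partial$ keeping only the terms where both derivatives hit $h_0$ or $h_2$, and then matches against Lemma~\ref{lap}. The only difference is that the paper carries the constants through explicitly rather than leaving them as $C_1,C_2$ to be checked at the end.
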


\begin{proof}
Let $\omega(t)$, $t\in (-\epsilon,\epsilon)$ be a smooth curve of Hermitian metrics with $\omega(0)=\omega$ on $M$ and we assume that 
$$
\psi:=\tfrac{\partial}{\partial t}_{|t=0}\omega^{n-1}(t)
$$
is closed. In order to simplify the notation  we set 
$$
r(t)=|\Omega|_{\omega(t)}^{-2}\,,\quad r=r(0)\,,\quad \dot r=\tfrac{\partial}{\partial t}_{|t=0}r(t)\,,\quad \dot \omega=\tfrac{\partial}{\partial t}_{|t=0}\omega(t)\,.
$$

We directly compute 
$$
\begin{aligned}
\tfrac{\partial}{\partial t}_{|t=0}E(\omega(t)^{n-1})=&\,i\partial \bar \partial (\dot{r}\,\omega^{n-2}+(n-2) r\, \dot{\omega}\wedge\omega^{n-3})\\
=&i\partial \bar \partial \dot{r}\,\wedge \omega^{n-2}+(n-2)i r\, \partial \bar \partial\dot{\omega}\wedge \omega^{n-3}+{\rm l.o.t.}
\end{aligned}
$$
We decompose $\psi$ according to \eqref{psi}.  Then \cite[Lemma 2.5]{Crelle} implies   
$$
\dot \omega=\frac{h_0}{(n-1)(n-1)!}\omega-\frac{1}{(n-1)!}h_2
$$
and using 
$$
\dot r=\frac{nh_0}{(n-1)(n-1)!} r
$$
we obtain 
\begin{multline*}
\tfrac{d}{dt}_{|t=0}E(\omega(t)^{n-1})=\frac{n}{(n-1)(n-1)!}i r\, \partial \bar \partial h_0\wedge \omega^{n-2}\\+\frac{n-2}{(n-1)(n-1)!}i r\, \partial \bar \partial h_0\wedge \omega^{n-2}-\frac{n-2}{(n-1)!}i r \,\partial \bar \partial h_2\wedge \omega^{n-3}+{\rm l.o.t.}\,,
\end{multline*}
i.e.
\begin{equation}\label{pre}
\tfrac{d}{dt}_{|t=0}E(\omega(t)^{n-1})=\frac{2}{(n-1)!}i r \,\partial \bar \partial h_0\wedge \omega^{n-2}-\frac{n-2}{(n-1)!}i r \, \partial \bar \partial h_2\wedge \omega^{n-3}+{\rm l.o.t.}
\end{equation}
and Lemma \ref{lap} implies \eqref{DEnonK}. Since the lower order terms in \eqref{pre} vanish if $\omega$  is closed, the claim follows. 
\end{proof}

\begin{proof}[Proof of Theorem $\ref{main}$]
Let $(M,\chi)$ be a compact K\"ahler manifold of complex dimension $n$. Let $\Omega$ be a complex volume form on $M$ with constant norm with respect to $\chi$ and let 
$E\colon C^{\infty}(M,\Lambda^{n-1,n-1}_{+})\to C^{\infty}(M,\Lambda^{n-1,n-1}_{\R})$ be as in \eqref{EEE}. Then we have 
$$
E(\chi^{n-1})=0\,.
$$ 
Now Lemma \ref{lin_E} and Hodge theory imply that all the assumptions of Theorem \ref{stabilityham} are satisfied when we consider
$$
\begin{array}{llll}
& F=\Lambda^{n-1,n-1}_\R\,, & U= \Lambda^{n-1,n-1}_+\,, & L=d\colon C^{\infty}(M,\Lambda^{n-1,n-1}_\R)\to C^{\infty}(M,\Lambda^{2n-1}_\C)\,.
\end{array}
$$
Hence for every $\epsilon>0$ there exists $\delta>0$ such that if $\tilde \omega_0$ is a Hermitian metric on $M$ satisfying 
\begin{equation}\label{vicinotilde}
\|\tilde \omega_0^{n-1}-\chi^{n-1}\|_{C^{\infty}}<\delta\,,
\end{equation}
then there exists  a smooth family of Hermitian metrics $\tilde \omega(t)$, $t\in [0,\infty)$, such that 
\begin{equation}\label{vicinotilde1}
\|\tilde \omega(t)^{n-1}-\chi\|_{C^{\infty}}<\epsilon\,,\quad \tfrac{\partial}{\partial t}\tilde \omega=i\partial \bar\partial(|\Omega|_{\tilde \omega}^{-2}\tilde\omega^{n-2})\,,\quad \tilde \omega(0)=\tilde \omega_0
\end{equation}
and $\tilde \omega(t)$ converges in $C^{\infty}$--topology to a Hermitian metric $\tilde\omega_{\infty}$ such that
\begin{equation}\label{vicinotilde2}
\partial \bar\partial (|\Omega|_{\tilde \omega_{\infty}}^{-2}\,\tilde \omega_{\infty}^{n-2})=0\,. 
\end{equation}

Now let $\omega_{0}$ be a Hermitian metric on $M$ satisfying \eqref{vicino}; then $\tilde\omega_0:=|\Omega|_{\omega_0}^{1/(n-1)}\omega_0$ satisfies \eqref{vicinotilde}. Thus there exists $\tilde \omega \in C^{\infty}(M\times[0,\infty),\Lambda^{1,1}_{\R})$ satisfying \eqref{vicinotilde1} and  converging in $C^{\infty}$-topology to a $\tilde \omega_\infty$ for which \eqref{vicinotilde2} holds. Therefore
$\omega=|\Omega|_{\tilde \omega}^{-2/(n-2)}\tilde{\omega}$ is a solution to the anomaly flow \eqref{phongflow} satisfying
$$
\||\Omega|_{\omega(t)} \omega(t)^{n-1}-\chi\|_{C^{\infty}}<\epsilon\,,
$$
and $|\Omega|_{\omega(t)}\omega(t)^{n-1}$ converges in $C^{\infty}$-topology to 
$$
\omega_{\infty}^{n-1}=|\Omega|_{\tilde\omega_{\infty}}^{-2(n-1)/(n-2)}\tilde{\omega}_{\infty}^{n-1}\,.
$$
By a change of variable we obtain that $\omega_{\infty}$ is astheno-K\"ahler, i.e.  
$$
\partial 	\bar\partial (\omega_{\infty}^{n-2})=0\,, 
$$
and the first part of the claim follows. 

\medskip 
About the second part of the claim, we use that if $\omega_0$ satisfies the conformally balanced condition
$d(|\Omega|_{\omega_0}\omega^{n-1}_0)=0$, then $d(|\Omega|_{\omega(t)}\omega(t)^{n-1})=0$ for every $t$
and hence $\omega_{\infty}$ satisfies 
$$
d(|\Omega|_{\omega_\infty}\omega_\infty^{n-1})=0	\,,\quad \partial 	\bar\partial (\omega_{\infty}^{n-2})=0\,. 
$$
In view of \cite[Lemma 1]{AF7} (see also \cite{Matsuo}), $\omega_\infty$ is K\"ahler-Einstein. 
\end{proof}

\section{Proof of Theorem \ref{stabilityham}}

In this section we prove the general result about the stability of geometric flows in Hamilton's set-up described in section \ref{description}. 
\begin{proof}
We adapt the proof of the main theorem in \cite{Advances}.  \\ 
Fix a metric connection $\nabla$ on $(F,\bar h)$ and a volume form on $M$. The space $C^{\infty}(M,F)$  has the natural structure of tame Fr\'echet space given by the Sobolev norms $\|.\|_{H^n}$ induced by $\bar h$, $\nabla$ and the volume form of $M$.

\medskip 
On $C^{\infty}(M\times [a,b], F)$ we consider the grading
$$
\|f\|_{n,[a,b]}=\sum_{2j\leq n}\int_{a}^{b}\|\partial_t^{j} f(t) \|_{H^{n-2j}}\, dt.
$$

Hamilton in \cite[sections 5 and 6]{positive} proved that, with respect to this grading, for any $T>0$ the map
$$
\mathcal F\colon C^{\infty}(M\times[0,T],U)\to C^{\infty}(M\times[0,T],F)\times C^{\infty}(M,F) 
$$
$$
\mathcal F(f)=(\partial f/\partial t-E(f), f(0))
$$
satisfies the assumptions of the Nash-Moser theorem, i.e. $\mathcal F$ is smooth tame, $D\mathcal F(f)$ is bijective for every 
$f\in C^{\infty}(M\times[0,T],U)$ and the family of the inverses
$$
C^{\infty}(M\times[0,T],U)\times  C^{\infty}(M\times[0,T],F)\times C^{\infty}(M,F)\to C^{\infty}(M\times[0,T],F)
$$
$$
(f,(g,k))\mapsto D \mathcal F (f)^{-1}(g,k)
$$
is a smooth tame map. Hence the Nash-Moser theorem can be applied and $\mathcal F$ is locally invertible with smooth tame inverse. As a direct consequence, arguing as in Proposition 5.3 in \cite{Advances} we have the following  

\medskip 
{\bf Claim 1.}
{\em For all $\epsilon,T>0$, there exists $\delta'>0$ such that if $f_0\in C^{\infty}(M,U)$ satisfies 
$$
\|f_0-\bar f\|_{C^\infty}<\delta'\,,
$$
then \eqref{flow_Ham} has a solution $f\in C^{\infty}(M\times[0,T],U)$ and 
$$
\|f-\bar f\|_{n,[0,T]}<\epsilon 
$$ 
for all $n \in \mathbb N$.}

\medskip 
Then we show the following

\medskip
{\bf Claim 2.}
{\em For $\delta'$ small enough the $L^2$-norm of $E(f)$ with respect to $\bar h$ has an exponential decay.} 

\medskip 
Fix a small time $\tau > 0$ arbitrary. By Claim 1 we have that there exists $\delta' >0$ such
that if $\|f_0-\bar f\|_{C^\infty}<\delta'$, then problem \eqref{flow_Ham} has a solution $f \in C^\infty(M \times [0, T + 2\tau], U)$ with $\|f - \bar f\|_{l, [0, T+2\tau]}$
bounded for every $l$. 
Now since
$$
\frac{\partial}{\partial t} E(f) = DE(f)(E(f))\,,
$$
we have
$$
\frac{d}{dt} \|E(f)\|^2_{L^2}=2 \langle \frac{\partial}{\partial t}E(f),E(f)\rangle_{L^2}=2 \langle DE(f)(E(f)),E(f)\rangle_{L^2}\,.
$$

Moreover a general result for families of symmetric operators on Hilbert spaces combined with the Sobolev Embedding theorem and elliptic regularity of $DE(\bar f)$ (see \cite{Advances} Corollary 5.6)
imply that given $a>0$ we can choose $\delta'$ so small that we have 
$$
\langle D E(f)(E(f)),E(f) \rangle_{L^2}\leq (1-a)\langle D E(\bar f)(E(f)),E(f) \rangle_{L^2}+a\|E(f)\|_{L^2}^2
$$
for every time in the interval $[0,T+\tau]$.
Now let $\lambda$ be half the smallest positive eigenvalue of $-DE(\bar f)_{| \ker L}$. 
Take $a=\frac{\lambda}{2\lambda+1}$ so that 
$$
\langle D E(f)(E(f)),E(f) \rangle_{L^2}\leq \frac{\lambda+1}{2\lambda+1}\langle D E(\bar f)(E(f)),E(f) \rangle_{L^2}+\frac{\lambda}{2\lambda+1}\|E(f)\|_{L^2}^2\,.
$$
By assumption \eqref{semidef} and \eqref{miracolo} we have $\langle D E(\bar f)(E(f)),E(f) \rangle_{L^2}\leq -2\lambda \|E(f)\|^2_{L^2} $ thus
$$
\langle D E(f)(E(f)),E(f) \rangle_{L^2}\leq- \lambda \|E(f)\|^2_{L^2}
$$
which implies 
$$
\frac{d}{dt}\|E(f)\|_{L^2}^2 \leq -2\lambda \|E(f)\|_{L^2}^2\,. 
$$
Using Gronwall's lemma we get  
$$
\|E(f(t))\|_{L^2}^2\leq e^{-2\lambda t} \|E(f_0)\|_{L^2}^2\,,\quad\mbox{ for all $t\in [0,T+\tau)$}\,,
$$
and the Claim 2 follows.

\medskip 
By integrating the last formula we get 
\begin{equation}
\label{exp2}
\|E(f)\|^2_{{0},[t,T+\tau]} = \int_{t}^{T+\tau}\|E(f(s))\|_{L^2}^2\,ds\leq \|E(f_0)\|^2_{L^2}\frac{e^{-2\lambda t}}{2\lambda}\,.
\end{equation}
Since using the parabolic Sobolev embedding theorem (see \cite{Advances} Corollary 5.8) 
there exist  $m$ and $C>0$ such that for every $t\in [0,T]$
$$
\|E(f(t))\|_{H^{n}} \leq C \|E(f)\|_{{m},[t,T+\tau]}\,,
$$
we will need the following estimate in order to prove $H^n$-exponential decay.

\medskip 
{\bf Claim 3.} {\em Let $\tau_0\in (0,T)$. For $\delta'$ small enough we have that for every 
 $m\in \mathbb N$ there exists $C>0$  such that
$$
\|E(f)\|_{{m},[t,T+\tau]} \leq C\|E(f)\|_{{0},[t-\tau_0,T+\tau]}\,,
$$
for every $t\in [\tau_0,T]\,.$}

\medskip 
We prove by induction on $m$ that for every $\tau_0 \in (0,T)$ we can choose $\delta'$ small enough such that there exists  a positive $C$ (depending on  $m$, 
$\tau_0$ and an upper bound on $\delta'$) such that for every $g\in C^{\infty}(M\times [0,T+\tau], F)$ solving  
$$
\frac{\partial}{\partial t}g=DE(f)(g)\,,
$$
the following estimate holds
\begin{equation}
\label{est1}
\|g\|_{{m},[t,T+\tau]} \leq C\|g\|_{{0},[t-\tau_0,T+\tau]}\,,
\end{equation}
for every $t\in [\tau_0,T]$.
Then we deduce the claim by setting $g=E(f)$. \\
For $m=0$ the estimate \eqref{est1} is trivial. We assume the above statement true up to $m=N$. For a smooth family of linear second order differential operator $P$ we set 
$$
|[P]|_{N} =\sum_{2j\leq N}[\tfrac{\partial^j}{\partial t^j}P]_{N-2j}
$$
where $[P]_N$ is the supremum of the norm of $P$ and its space covariant derivatives up to degree $N$. By [2, Lemma 6.10]
for $\delta'$ small enough there exists $C >0$,
depending on
$T$ and an upper bound on $\delta'$, such that for $t\in  [0, T+\tau)$ and $g\in C^{\infty}(M\times [0,T+\tau], F)$ we have
\begin{multline*}
\|g\|_{N+2,[t,T+\tau]} \leq C\left(\|\tfrac{\partial}{\partial t}g-DE(f)(g)\|_{N,[t,T+\tau]}+\|g(t)\|_{H^{N+1}}\right)\\
+C|[DE(f)]|_{N}\left(\|\tfrac{\partial}{\partial t}g-DE(f)(g)\|_{0,[t,T+\tau]}+\|g(t)\|_{H^{1}}\right)\,,
\end{multline*}
which implies 
$$
\|g\|_{N+2,[t,T+\tau]} \leq C(1+|[DE(f)]|_{N}) \left(\|\tfrac{\partial}{\partial t}g-DE(f)(g)\|_{N,[t,T+\tau]}+\|g(t)\|_{H^{N+1}}\right)\,.
$$
Up to shrink $\delta'$ we may assume 
$$
 |[DE(f)]|_{N}\leq  1+ |[DE(\bar f)]|_N
$$
in order to rewrite the last estimate as
\begin{multline}
\label{est2}
\|g\|_{N+2,[t,T+\tau]} \\
\leq C(2+|[DE(\bar f)]|_{N}) \left(\|\tfrac{\partial}{\partial t}g-DE(f)(g)\|_{N,[t,T+\tau]}+\|g(t)\|_{H^{N+1}}\right)\,.
\end{multline}
Now assume that $g$ satisfies 
$$
\frac{\partial}{\partial t}g=DE(f)(g)
$$
and let $\chi\colon \R\to [0,1]$ be smooth and such that 
$$
\begin{cases}
\chi(s)=0	\quad \mbox{ for } s \leq t-\frac{\tau_0}{2}\\
\chi(s)=1    \quad \mbox{ for } s \geq t\,.
\end{cases}
$$
Then $\tilde g=\chi g$ satsfies 
$$
\frac{\partial}{\partial t}\tilde g=DE(f)(\tilde g)+\dot\chi g
$$
and from \eqref{est2} we deduce
\begin{multline*}
\|g\|_{N+2,[t,T+\tau]}\leq \|\tilde g\|_{N+2,[t-\frac{\tau_{0}}{2},T+\tau]} \\
\leq C(2+|[DE(\bar f)]|_{N}) \|\dot\chi g\|_{N,[t-\frac{\tau_0}{2},T+\tau]}
\leq C' \|g\|_{N,[t-\frac{\tau_0}{2},T+\tau]}
\end{multline*}
and the claim follows using the inductive hypothesis.

\medskip 
Finally putting these together with \eqref{exp2} we have that for $\delta'$ small enough we have 
\begin{equation}
\label{Hndecay}
\|E(f(t))\|_{H^{n}} \leq C  \|E(f_0)\|_{L^2}{\rm e}^{{-\lambda} t}\,
\end{equation}
for $t \in [\tau_0,T]$. The constant $C$ may depend on $T,\tau_0$ and an upperbound on $\delta'$, but not on  
$t$ and $f_0$. 

\medskip 
Now we choose $\delta \leq \delta'$ such that  if $\|f_0-\bar f\|_{C^{\infty}}\leq \delta$ then
\begin{equation}\label{condizione}
C\|E(f_0)\|_{L^2}\frac{{\rm e}^{-\lambda \tau_0}}{\lambda}\sum_{j=0}^{\infty}{\rm e}^{-\lambda j(T-\tau_0)}+\|f({\tau_0})-\bar f\|_{H^{n}}\leq \delta'\,. 
\end{equation}

Using \eqref{Hndecay} and \eqref{condizione} and working as in \cite{Advances} we have that for  any $t\in [NT-(N-1)\tau_0,(N+1)T-N\tau_0]$ with $N \in \mathbb N$ 
$$
\|f-\bar f\|_{H^{n}}\leq C\|E(f_0)\|_{L^2}\frac{{\rm e}^{-\lambda \tau_0}}{\lambda}\sum_{j=0}^{N}{\rm e}^{-\lambda j(T-\tau_0)}+\| f({\tau_0})-\bar f\|_{H^{n}}\leq \delta'.
$$
This allows us to conclude that the solution $f$ is defined in $M\times [0,\infty)$. 

Now let $f_\infty=f_0+\int_{0}^\infty E(f)ds\in  C^{\infty}(M,F)$;
since 
$$
\lim_{t\to \infty}\|f(t)-f_{\infty}\|_{H^n}\leq \lim_{t\to \infty}C\|E(f_0)\|_{L^2}{\rm e}^{-\lambda t}=0\,, \mbox{ for $n$ large enough} 
$$
$f(t)$ converges to $f_\infty$ in $C^\infty$-topology. Possibly shrinking $\delta$, we will have $f_\infty\in C^{\infty}(M,U)$. 
Using again \cite[Proposition 5.7]{Advances}, we have that up to shrink $\delta$, 
$$
\|f(t)-\bar f\|_{C^{\infty}}<\epsilon
$$
for every $t\in [0,\infty)$. 

Finally 
$$
E(f_\infty)=\lim_{t\to 0} E(f(t))=0
$$
and the claim follows. 
\end{proof}

\section*{Acknowledgements} The authors would like to thank the anonymous referee for pointing out several inaccuracies and helping to considerably improve the presentation of the paper.


\end{document}